%% file: main.tex
\documentclass{article}

\usepackage{amsmath,amssymb,amsthm}
\usepackage{graphicx}
\usepackage{bm}

\usepackage{hyperref}
\usepackage{color}

\theoremstyle{definition}
\newtheorem{definition}{Definition}[section]

\theoremstyle{plain}
\newtheorem{theorem}[definition]{Theorem}
\newtheorem{proposition}[definition]{Proposition}
\newtheorem{lemma}[definition]{Lemma}

\theoremstyle{remark}

\title{Recursive Patterns in the Chocolate Game}
\author{
  Tomoro Okubo \and
  Yuzuri Kashiwagi \and
  Nobumitsu Niida
}
\date{}

\begin{document}

\maketitle

\begin{abstract}
We study the recursive structure of P-positions in the chocolate game \(C_{m,m}\), an impartial game played on an \(m \times m\) chocolate bar. We show that the set of P-positions exhibits self-similar patterns that can be described and enumerated recursively. We further establish a correspondence between these patterns and the cross-sections of a three-dimensional Sierpiński octahedron. Finally, we show that the P-positions can be generated by a second-order cellular automaton, analogous to the one-dimensional Rule-60 automaton. Our results reveal deep connections between combinatorial games, fractal geometry, and discrete dynamical systems.
\end{abstract}

\section{Introduction}
This paper explores the structural properties of $P$-positions in the chocolate game $C_{m,n}$, an impartial game played on a rectangular bar. While Nim-like games are typically studied through algebraic means, we show that $C_{m,m}$ admits a rich geometric and computational description.

Specifically, we prove that the $P$-positions of $C_{m,m}$ exhibit a self-similar pattern equivalent to cross-sections of the Sierpi\'nski octahedron. Furthermore, we show that this pattern emerges from a second-order cellular automaton related to Wolfram’s Rule-60. By connecting game theory with fractal geometry and cellular automata, we provide both a recursive enumeration and a novel visualization of winning strategies. 

The structure of the paper is as follows: Section 2 reviews the basics of the chocolate game. 
Sections 3, 4, and 5 present our main results on P-positions, self-similarity, and cellular automata, 
followed by a discussion of future directions.

\section{Chocolate Game and P-Positions}

\begin{definition}
Let $m,n$ be positive integers.
The \emph{chocolate game} $C_{m,n}$ is a two-player impartial game defined as follows.

The game is played on an $m \times n$ rectangular chocolate bar divided into unit square cells,
exactly one of which is poisoned.
The players alternately perform the following move:
\begin{itemize}
  \item Break the chocolate bar along grid lines into two rectangular pieces,
        eat one piece, and give the other piece to the opponent.
  \item If the chocolate bar cannot be broken, the player must eat it.
\end{itemize}
The player who eats the poisoned cell loses.
The position of the poisoned cell is assumed to be visible to both players.

We assume that a player always avoids eating the poisoned cell whenever a legal move allows this.
The terminal position of the game is defined to be the position in which a $1 \times 1$ chocolate bar
is passed to a player and no further move is possible.
\end{definition}

\begin{definition}
For the game $C_{m,n}$, define
\[
B_{m,n}=\{(i,j)\in \mathbb{Z}^2 \mid 1\le i\le m,\ 1\le j\le n\},
\]
and call $B_{m,n}$ the \emph{board} of $C_{m,n}$.
Elements of $B_{m,n}$ are called \emph{cells}.

A cell corresponding to a poisoned position for which the second player has a winning strategy
is called a \emph{P-position}.
Let $P_{m,n}\subset B_{m,n}$ denote the set of all P-positions.

We define the \emph{pattern} of $P_{m,n}$ as the set
\[
R_{m,n}
=
\bigl\{(i-x,j-y)\in\mathbb{R}^2
\mid 0\le x\le1,\ 0\le y\le1,\ (i,j)\in P_{m,n}\bigr\},
\]
which gives a geometric visualization of $P_{m,n}$ (see Figure~\ref{fig:pattern}).

For a cell $(i,j)\in B_{m,n}$, we define its \emph{cell value} by
\[
(i-1)\oplus(j-1)\oplus(m-i)\oplus(n-j),
\]
where $\oplus$ denotes the bitwise exclusive OR (xor) of integers.

In the square case $m=n$, define
\[
v_m(i,j)=
\begin{cases}
0, & \text{if } (i-1)\oplus(j-1)\oplus(m-i)\oplus(m-j)=0,\\
1, & \text{otherwise}.
\end{cases}
\]

Finally, for a subset $A\subset\mathbb{R}^2$ and a real number $r$, we write
\[
rA=\{(rx,ry)\mid(x,y)\in A\}.
\]
\end{definition}

\begin{figure}[htbp]
\centering
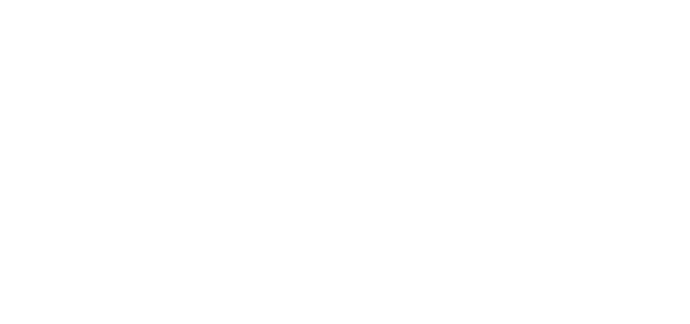
\caption{The scaled pattern $\frac{1}{m}R_{m,m}$}
\label{fig:pattern}
\end{figure}

The following theorem is a well-known criterion for P-positions in the game of
four-pile Nim.
A proof can be found, for example, in \cite{Sato2014}.
Here we state and prove the result in the context of the square chocolate game $C_{m,m}$.

\begin{theorem}
\label{thm:nim}
For the chocolate game $C_{m,m}$, a cell $(i,j)$ is a P-position if and only if
\[
(i-1)\oplus(j-1)\oplus(m-i)\oplus(m-j)=0.
\]
\end{theorem}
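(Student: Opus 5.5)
We will reduce the game to four-pile Nim. Given a chocolate bar containing the poisoned cell, record four nonnegative integers: the number of columns to the left of the poison, the number of columns to its right, the number of rows above it, and the number of rows below it. For the initial bar $B_{m,m}$ with poison at $(i,j)$ these are
\[
(a,b,c,d)=(i-1,\;m-i,\;j-1,\;m-j).
\]
A legal move breaks the bar along one grid line and keeps the piece containing the poison, since by assumption players never eat the poisoned cell when avoidable. A vertical break strictly decreases exactly one of $a$ or $b$, to any smaller nonnegative value, and leaves the other three coordinates unchanged. A horizontal break does the same for $c$ or $d$. Conversely, every such decrease is realized by exactly one break. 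So the positions and moves of the chocolate game are in bijection with those of Nim with heaps $a,b,c,d$.

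The terminal position, a $1\times1$ bar with no legal move, corresponds to $(0,0,0,0)$. Forced eating of the poison then means the player to move loses, which is the normal play convention. Hence the poisoned position is a P-position in $C_{m,m}$ exactly when $(a,b,c,d)$ is a P-position of Nim.

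Next we invoke Bouton's theorem: a Nim position is a P-position iff the xor of its heap sizes is $0$. For completeness we sketch the standard argument.
\begin{itemize}
\item From a position with xor $0$, changing one heap changes the xor, so every move leads to a nonzero xor.
\item From a position with nonzero xor $s$, choose a heap $h$ containing the highest set bit of $s$. Then $h\oplus s<h$, and replacing $h$ by $h\oplus s$ is a legal move to xor $0$.
\item Since the total heap size strictly decreases with each move, induction on it gives the characterization.
\end{itemize}
Substituting $(a,b,c,d)$ yields exactly $(i-1)\oplus(j-1)\oplus(m-i)\oplus(m-j)=0$, which is the statement of Theorem~\ref{thm:nim}.

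The main obstacle is stating the bijection of moves carefully. Two points need care. First, a break along a line adjacent to the poison must reduce the corresponding coordinate to $0$, which is allowed. Second, the rule "if the bar cannot be broken, eat it" must be checked to occur only at $(0,0,0,0)$. Once these are verified, the rest is the classical Nim argument.
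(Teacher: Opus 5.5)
Your proposal is correct and follows the same route as the paper: you identify each break with decreasing one of the four quantities $i-1$, $j-1$, $m-i$, $m-j$, which turns $C_{m,m}$ into four-pile Nim, and then you run Bouton's argument (every move from xor $0$ gives a nonzero xor, and from a nonzero xor you reduce the heap carrying its top bit to reach xor $0$). Your explicit check of the terminal position $(0,0,0,0)$ and your induction on the total heap size make the final step somewhat more rigorous than the paper's appeal to ``the recursive structure of the game''.
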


\begin{proof}
Let $(i,j)$ be the cell corresponding to the poisoned position.
A single move of breaking the chocolate can be regarded as choosing one of the
four nonzero integers
\[
i-1,\quad j-1,\quad m-i,\quad m-j,
\]
and decreasing it by a positive integer so that it remains nonnegative.

By symmetry, it suffices to consider the case in which $i-1$ is decreased.
Set
\[
X=(i-1)\oplus(j-1)\oplus(m-i)\oplus(m-j).
\]
Suppose that one move transforms the position into
\[
Y=(i-\alpha)\oplus(j-1)\oplus(m-i)\oplus(m-j),
\qquad 2\le \alpha\le i.
\]
Using the associativity and commutativity of $\oplus$ together with
$(i-1)\oplus(i-1)=0$, we obtain
\[
Y=X\oplus(i-1)\oplus(i-\alpha).
\]

We claim that the following two statements hold:
\begin{enumerate}
\item If $X=0$, then $Y\neq0$ for any legal move.
\item If $X\neq0$, then there exists a legal move such that $Y=0$.
\end{enumerate}

For (1), note that $(i-1)\oplus(i-\alpha)\neq0$, which implies $Y\neq0$.

To prove (2), write $X$ in binary form and let $2^p$ be the highest power of $2$
appearing in $X$.
If the $2^p$-digit of $i-1$ is $1$, then
\[
X\oplus(i-1)<(i-1),
\]
and hence we can choose $\alpha$ so that
\[
i-\alpha = X\oplus(i-1),
\]
which yields $Y=0$.
The same argument applies if the $2^p$-digit is $1$ in $j-1$, $m-i$, or $m-j$.

The terminal position is a P-position and satisfies $X=0$. Since conditions (1) and (2) characterize the recursive structure of the game, it follows that a position is a P-position if and only if $X=0$. 
\end{proof}

\section{Recursive Structure of P-Positions}

\begin{proposition}
\label{prop:symmetry}
The pattern $R_{m,m}$ is symmetric with respect to the horizontal and vertical
axes, and also with respect to the diagonal.
More precisely, for any cell $(i,j)$, the cells
\[
(i,j),\ (i,m-j+1),\ (m-i+1,j),\ (j,i)
\]
are either all P-positions or all non-P-positions.
\end{proposition}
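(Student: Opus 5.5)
We will reduce the statement to the arithmetic criterion of Theorem~\ref{thm:nim} and check that the quantity
\[
X(i,j)=(i-1)\oplus(j-1)\oplus(m-i)\oplus(m-j)
\]
is invariant under each of the three transformations. Since $(i,j)$ is a P-position exactly when $X(i,j)=0$, invariance of $X$ under $(i,j)\mapsto(i,m-j+1)$, $(i,j)\mapsto(m-i+1,j)$ and $(i,j)\mapsto(j,i)$ shows that the four listed cells are simultaneously P-positions or simultaneously not.

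First we check that each map sends $B_{m,m}$ to itself. If $1\le j\le m$ then $1\le m-j+1\le m$, and swapping coordinates preserves the square board. Next we compute the values directly.

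For $(i,m-j+1)$ the four summands are $i-1$, $(m-j+1)-1=m-j$, $m-i$, and $m-(m-j+1)=j-1$. This is the same multiset as for $(i,j)$, with the second and fourth entries interchanged. By commutativity and associativity of $\oplus$ the xor is unchanged.

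The map $(m-i+1,j)$ is handled symmetrically: the entries $i-1$ and $m-i$ are interchanged.

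For $(j,i)$ the summands are $j-1$, $i-1$, $m-j$, $m-i$, again a permutation of the original four.

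The pattern statement then follows, because the unit squares defining $R_{m,m}$ are carried to unit squares under the corresponding reflections of the region $[0,m]^2$: the lines $y=m/2$ and $x=m/2$, and the diagonal $y=x$. The map $j\mapsto m-j+1$ on cell indices corresponds to $y\mapsto m-y$ on the square $[j-1,j]$. No step is a genuine obstacle. The only care needed is this bookkeeping between cell indices and the geometric reflection of the pattern.
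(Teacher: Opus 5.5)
Your proposal is correct and matches the paper's proof: both observe that the four listed cells have the same multiset of summands $\{i-1,\,j-1,\,m-i,\,m-j\}$, hence the same cell value, and then conclude via Theorem~\ref{thm:nim}. Your extra bookkeeping is a harmless elaboration that the paper leaves implicit: you check that the maps preserve $B_{m,m}$, and that they correspond to the reflections $x\mapsto m-x$, $y\mapsto m-y$ and $(x,y)\mapsto(y,x)$ of $[0,m]^2$.
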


\begin{proof}
The cell values of all four cells coincide and are equal to
\[
(i-1)\oplus(j-1)\oplus(m-i)\oplus(m-j).
\]
The claim therefore follows from Theorem~\ref{thm:nim}.
\end{proof}

\begin{proposition}
\label{prop:diagonal}
In the chocolate game $C_{m,m}$, all cells on the two diagonals are P-positions.
That is, for any integer $1\le k\le m$, the cells $(k,k)$ and $(k,m+1-k)$ are
P-positions.
\end{proposition}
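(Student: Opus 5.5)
By Theorem~\ref{thm:nim}, it suffices to show that the cell value
\[
(i-1)\oplus(j-1)\oplus(m-i)\oplus(m-j)
\]
vanishes at the cells in question. The only tools needed are that $\oplus$ is commutative and associative and that $a\oplus a=0$ for every nonnegative integer $a$.

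Take first the main diagonal cell $(k,k)$ with $1\le k\le m$. Substituting $i=j=k$ gives
\[
(k-1)\oplus(k-1)\oplus(m-k)\oplus(m-k).
\]
We regroup it as $\bigl((k-1)\oplus(k-1)\bigr)\oplus\bigl((m-k)\oplus(m-k)\bigr)=0\oplus0=0$. Hence $(k,k)$ is a P-position by Theorem~\ref{thm:nim}.

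For the anti-diagonal cell $(k,m+1-k)$ there are two routes. One is to apply Proposition~\ref{prop:symmetry} to $(i,j)=(k,k)$: it says $(k,k)$ and $(k,m-k+1)$ are both P-positions or both non-P-positions, and we have just shown $(k,k)$ is a P-position. The other is to compute directly with $j=m+1-k$, which gives $j-1=m-k$ and $m-j=k-1$. The cell value then becomes $(k-1)\oplus(m-k)\oplus(m-k)\oplus(k-1)$, and the same pairwise cancellation makes it $0$. I would use the direct computation, so that the argument rests only on Theorem~\ref{thm:nim}, and mention the symmetry as a remark.

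The only point needing care is that these cells lie in $B_{m,m}$ and all four quantities are nonnegative integers. For $1\le k\le m$ both $k$ and $m+1-k$ lie in $[1,m]$, so this holds. There is no substantive obstacle: the proposition is a direct corollary of the xor criterion.
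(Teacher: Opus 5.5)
Your proof is correct and is essentially the paper's argument: both substitute the diagonal and anti-diagonal cells into the xor criterion of Theorem~\ref{thm:nim}. In each case the cell value consists of the terms $k-1$ and $m-k$, each appearing twice, so it cancels to $0$.
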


\begin{proof}
For these cells, the corresponding cell value satisfies
\[
(k-1)\oplus(k-1)\oplus(m-k)\oplus(m-k)=0.
\]
The assertion follows again from Theorem~\ref{thm:nim}.
\end{proof}

\begin{theorem}
\label{thm:doubling}
For any positive integer $m$,
\[
R_{2m,2m}=2R_{m,m}.
\]
In particular, $R_{m,m}$ and $R_{2m,2m}$ are similar figures.
\end{theorem}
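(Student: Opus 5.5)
The plan is to reduce the set identity to a statement about individual cells, and then verify that statement with a short parity computation on the xor in Theorem~\ref{thm:nim}.

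\textbf{Step 1: reduction to cells.} By definition, $R_{m,m}$ is the union of the closed unit squares $[i-1,i]\times[j-1,j]$ over $(i,j)\in P_{m,m}$. Scaling by $2$ sends such a square to $[2i-2,2i]\times[2j-2,2j]$. This is exactly the union of the four unit squares of the cells
\[
(2i-1,2j-1),\ (2i,2j-1),\ (2i-1,2j),\ (2i,2j)
\]
of $B_{2m,2m}$. These $2\times 2$ blocks, taken over all $(i,j)\in B_{m,m}$, partition $B_{2m,2m}$. Hence it suffices to prove the following. For every $(i,j)\in B_{m,m}$ and every choice $I\in\{2i-1,2i\}$, $J\in\{2j-1,2j\}$, the cell $(I,J)$ lies in $P_{2m,2m}$ if and only if $(i,j)\in P_{m,m}$. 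Then $R_{2m,2m}$ is the union of the blocks over $(i,j)\in P_{m,m}$, which is $2R_{m,m}$. Similarity follows immediately.

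\textbf{Step 2: the parity computation.} By Theorem~\ref{thm:nim}, we must compare $(I-1)\oplus(J-1)\oplus(2m-I)\oplus(2m-J)$ with $(i-1)\oplus(j-1)\oplus(m-i)\oplus(m-j)$. Put $a=i-1$ and $b=m-i$. If $I=2i-1$, then $I-1=2a$ and $2m-I=2b+1$. If $I=2i$, then $I-1=2a+1$ and $2m-I=2b$. In either case, exactly one of the two numbers is odd, and shifting off the last bit leaves $a$ and $b$. Therefore
\[
(I-1)\oplus(2m-I)=2(a\oplus b)+1 .
\]
Likewise, with $c=j-1$ and $d=m-j$, we get $(J-1)\oplus(2m-J)=2(c\oplus d)+1$. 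Xoring these two expressions, the low bits cancel, giving
\[
(I-1)\oplus(J-1)\oplus(2m-I)\oplus(2m-J)=2\,(a\oplus b\oplus c\oplus d).
\]
This vanishes exactly when the cell value of $(i,j)$ in $C_{m,m}$ vanishes, so Theorem~\ref{thm:nim} gives the claim of Step 1.

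\textbf{Main obstacle.} The only delicate point is Step 1: confirming that the scaled squares align exactly with the $2\times2$ blocks and that these blocks tile $B_{2m,2m}$, so that the unions agree as subsets of $\mathbb{R}^2$. Once this is in place, the xor identity in Step 2 is a routine bitwise check.
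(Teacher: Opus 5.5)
Your proof is correct and follows the paper's route: you reduce to showing that the four cells $(2i-1,2j-1),(2i,2j-1),(2i-1,2j),(2i,2j)$ all share the status of $(i,j)$, and then use the parity-bit behaviour of $\oplus$ to turn the cell value into $2\bigl((i-1)\oplus(j-1)\oplus(m-i)\oplus(m-j)\bigr)$, concluding with Theorem~\ref{thm:nim}. The differences are presentational: your pairing of $I-1$ with $2m-I$ gives $2(a\oplus b)+1$ for either choice of $I$, so one computation covers all four cells, whereas the paper works two cases explicitly and says the mixed cases are ``treated in the same way''; you also write out the set-level reduction in Step~1 that the paper compresses into ``it suffices to show.''
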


\begin{proof}
It suffices to show that
\[
v_{2m}(2i,2j)
=
v_{2m}(2i-1,2j)
=
v_{2m}(2i,2j-1)
=
v_{2m}(2i-1,2j-1)
=
v_m(i,j)
\]
for all $1\le i,j\le m$.

Using binary representations, the exclusive OR operation satisfies
\begin{equation}
\label{eq:xor}
\begin{aligned}
2a\oplus2b &= 2(a\oplus b),\\
(2a+1)\oplus(2b+1) &= 2(a\oplus b).\nonumber
\end{aligned}
\end{equation}

We compute
\begin{align*}
&(2i-2)\oplus(2j-2)\oplus(2m-2i+1)\oplus(2m-2j+1)\\
&=2(i-1)\oplus2(j-1)\oplus\{2(m-i)+1\}\oplus\{2(m-j)+1\}\\
&=2\bigl((i-1)\oplus(j-1)\oplus(m-i)\oplus(m-j)\bigr),
\end{align*}
which implies $v_{2m}(2i-1,2j-1)=v_m(i,j)$.

Similarly,
\begin{align*}
&(2i-1)\oplus(2j-1)\oplus(2m-2i)\oplus(2m-2j)\\
&=\{2(i-1)+1\}\oplus\{2(j-1)+1\}\oplus2(m-i)\oplus2(m-j)\\
&=2\bigl((i-1)\oplus(j-1)\oplus(m-i)\oplus(m-j)\bigr),
\end{align*}
so that $v_{2m}(2i,2j)=v_m(i,j)$.

The remaining two cases,
$v_{2m}(2i-1,2j)$ and $v_{2m}(2i,2j-1)$, are treated in the same way.
Hence the claim follows.
\end{proof}

\begin{theorem}
\label{thm:decomposition}
Let $m$ be a positive integer and write its binary expansion as
\[
m = a_0 + a_1 2 + a_2 2^2 + \cdots + a_k 2^k + 2^{k+1}.
\]
Define
\[
s = a_0 + a_1 2 + a_2 2^2 + \cdots + a_k 2^k,
\qquad
t = 2^{k+1} - s.
\]
Then the pattern $R_{m,m}$ consists of four copies of $R_{s,s}$ placed at the
corners and one copy of $R_{t,t}$ placed at the center.
\end{theorem}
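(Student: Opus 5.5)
The plan is to reduce everything to a one-variable function and compare its values on three ranges of indices. Put $N=2^{k+1}$, so that $m=N+s$ with $0\le s<N$ and $t=N-s$. In particular
\[
m=s+t+s .
\]
This lets us cut the board $B_{m,m}$ into a $3\times 3$ array of blocks, with row and column ranges
\[
I_1=\{1,\dots,s\},\qquad I_2=\{s+1,\dots,s+t\},\qquad I_3=\{s+t+1,\dots,m\}.
\]
For each $m$ define
\[
f_m(i)=(i-1)\oplus(m-i).
\]
By Theorem~\ref{thm:nim}, the cell $(i,j)$ is a P-position if and only if $f_m(i)=f_m(j)$. The theorem will therefore follow from a description of $f_m$ on each of $I_1,I_2,I_3$.

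\textbf{Step 1 (corner ranges).} Take $i\in I_1$. Then $0\le i-1<s<N$ and $m-i=N+(s-i)$ with $0\le s-i<N$. The bit $N$ therefore occurs only in $m-i$, and
\[
f_m(i)=N+f_s(i).
\]
By the symmetry $i\mapsto m+1-i$ (the same computation behind Proposition~\ref{prop:symmetry}), for $i\in I_3$ we get
\[
f_m(i)=N+f_s(i-s-t).
\]

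\textbf{Step 2 (central range).} Write $i=s+i'$ with $1\le i'\le t$. Then
\[
i-1=s+i'-1,\qquad m-i=s+t-i'=N-i',
\]
and both numbers lie in $[0,N-1]$. Let $\overline{x}=N-1-x$ denote the bitwise complement of $x$ in $k+1$ bits. Then
\[
N-i'=\overline{\,i'-1\,},\qquad s+i'-1=\overline{\,t-i'\,}.
\]
Complementing both arguments does not change an xor, so
\[
f_m(i)=(i'-1)\oplus(t-i')=f_t(i'),
\]
and this value is less than $N$. This is the step I expect to be the only real obstacle. Everything hinges on spotting the complement trick $N-1-x=\overline{x}$; once it is seen, the identity is immediate.

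\textbf{Step 3 (assembly).} On $I_1\cup I_3$ the value $f_m$ has the bit $N$ set, and on $I_2$ it does not. Hence a cell with one coordinate in $I_2$ and the other in $I_1\cup I_3$ is never a P-position, so the four edge blocks contain no P-positions.

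In a corner block, say $I_1\times I_1$, the condition $f_m(i)=f_m(j)$ becomes $f_s(i)=f_s(j)$. By Theorem~\ref{thm:nim} applied to $C_{s,s}$, this block reproduces $P_{s,s}$. The other three corners are handled in the same way after the shift by $s+t$ in the relevant coordinates.

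In the centre block $I_2\times I_2$, the condition becomes $f_t(i')=f_t(j')$, which is exactly $P_{t,t}$ translated by $(s,s)$.

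Passing from cells to the patterns $R$ gives four translated copies of $R_{s,s}$ at the corners and one translated copy of $R_{t,t}$ at the centre. In the degenerate case $s=0$ the corner copies are empty and the statement reduces to $R_{N,N}$ being the central copy, which is consistent with the above.
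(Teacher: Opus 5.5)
Your proof is correct and follows the same three-case decomposition as the paper. The corner blocks work by cancelling the $2^{k+1}$ bit, the central block by the identity $v_m(i,j)=v_t(i-s,j-s)$, and the off-diagonal blocks because the $2^{k+1}$ bit occurs exactly once; your complement identity $N-1-x=\overline{x}$ also supplies the justification for the central-block identity, which the paper only asserts.
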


For example, if $m=11=1011_{2}$, then $s=011_{2}=3$ and $t=101_{2}=5$.
In this case, $R_{11,11}$ consists of four copies of $R_{3,3}$ at the corners
and one copy of $R_{5,5}$ in the center (see Figure~\ref{fig:decomposition}).

\begin{figure}[htbp]
\centering
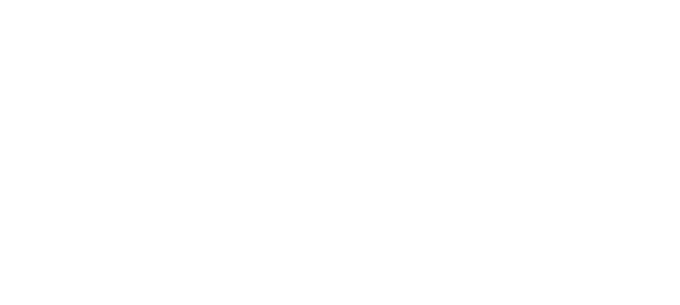
\caption{$R_{3,3}$, $R_{5,5}$, and $R_{11,11}$}
\label{fig:decomposition}
\end{figure}

\begin{proof}
By symmetry (Proposition~\ref{prop:symmetry}), it suffices to verify the following
three cases.

\begin{enumerate}
\item
If $1\le i\le s$ and $1\le j\le s$, then since $m=2^{k+1}+s$, we have
\begin{align*}
v_m(i,j)
&=(i-1)\oplus(j-1)\oplus(2^{k+1}+s-i)\oplus(2^{k+1}+s-j)\\
&=(i-1)\oplus(j-1)\oplus(s-i)\oplus(s-j)\\
&=v_s(i,j).
\end{align*}

\item
If $s+1\le i\le 2^{k+1}$ and $s+1\le j\le 2^{k+1}$, then
\begin{align*}
v_m(i,j)
&=(i-1)\oplus(j-1)\oplus(2s+t-i)\oplus(2s+t-j)\\
&=v_t(i-s,j-s).
\end{align*}

\item
If $s+1\le i\le 2^{k+1}$ and $1\le j\le s$, then
\[
v_m(i,j) = (i-1)\oplus(j-1)\oplus
(2^{k+1}-(i-s))\oplus(2^{k+1}+s-j).
\]

Since $i-s \geq 1$ and $s-j \geq 0$, notice that the terms $2^{k+1}-(i-s)$ and $2^{k+1}+s-j$ straddle a power of two. In particular,
\[
2^{k+1}-(i-s) < 2^{k+1}, \quad 2^{k+1}+s-j \geq 2^{k+1}.
\]

This implies that the exclusive OR of these four terms cannot vanish, i.e.,
\[
v_m(i,j) = (i-1)\oplus(2^{k+1}-i+s)\oplus(j-1)\oplus(2^{k+1}+s-j) \neq 0.
\]
\end{enumerate}

Combining these cases proves the theorem.
\end{proof}

Propositions~\ref{prop:symmetry} and \ref{prop:diagonal}, together with
Theorems~\ref{thm:doubling} and \ref{thm:decomposition}, show that
the set $P_{m,m}$ can be determined recursively.
\section{Enumeration of P-Positions}

In this section, we investigate the number of P-positions.
Let
\[
g(m)=\# P_{m,m}
\]
denote the number of P-positions for the square chocolate game $C_{m,m}$.
For a positive integer $x$, let $u(x)$ denote the largest nonnegative integer
such that $2^{u(x)}$ divides $x$.

\begin{lemma}
\label{lem:recurrence}
For any positive integer $m$, the following relations hold:
\begin{align*}
g(2m) &= 4g(m),\\
g(2m+1) &= g(m)+g(m+1).
\end{align*}
\end{lemma}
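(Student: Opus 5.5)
The plan is to prove the two relations separately. The first follows directly from Theorem~\ref{thm:doubling}; the second needs a direct xor computation based on the parity of the board size.

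For $g(2m)=4g(m)$, I will use the identities established in the proof of Theorem~\ref{thm:doubling}. For every $1\le i,j\le m$, the four cells $(2i-1,2j-1)$, $(2i-1,2j)$, $(2i,2j-1)$, $(2i,2j)$ of $B_{2m,2m}$ all satisfy $v_{2m}=v_m(i,j)$. These $2\times 2$ blocks partition $B_{2m,2m}$. So each P-position of $C_{m,m}$ contributes exactly four P-positions of $C_{2m,2m}$, and every P-position of $C_{2m,2m}$ arises this way. Theorem~\ref{thm:nim} identifies P-positions with the zero set of $v$, which gives $g(2m)=4g(m)$.

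For $g(2m+1)=g(m)+g(m+1)$, the board has odd size $M=2m+1$. I split the cells of $B_{M,M}$ by the parity of $(i,j)$.

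\emph{Case $i,j$ both odd.} Write $i=2a+1$ and $j=2b+1$ with $0\le a,b\le m$. Then $i-1=2a$ and $M-i=2(m-a)$, and similarly for $j$. All four summands are even, so the cell value equals $2\bigl(a\oplus b\oplus(m-a)\oplus(m-b)\bigr)$. This is exactly twice the cell value of $(a+1,b+1)$ in $C_{m+1,m+1}$, because $(m+1)-(a+1)=m-a$. These cells therefore contribute $g(m+1)$ P-positions.

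\emph{Case $i,j$ both even.} Write $i=2a$ and $j=2b$ with $1\le a,b\le m$. Then $i-1=2(a-1)+1$ and $M-i=2(m-a)+1$, so all four summands are odd. Their low bits cancel, and the cell value is $2\bigl((a-1)\oplus(b-1)\oplus(m-a)\oplus(m-b)\bigr)$, which is twice the cell value of $(a,b)$ in $C_{m,m}$. These cells contribute $g(m)$.

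\emph{Mixed parity.} If exactly one of $i,j$ is odd, then among the four summands three have one parity and one has the other. Concretely, $i-1$ and $M-i$ always have equal parity, namely that of $i-1$, and likewise $j-1$ and $M-j$ share the parity of $j-1$. So the total parity of the four summands is even in every case, and the low-bit argument alone does not exclude these cells.

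This mixed-parity case is the main obstacle. The intended claim is that such cells contribute nothing. A parity count cannot show this, so I will argue through the low bit explicitly. For $i$ odd and $j$ even, the summands $i-1$ and $M-i$ are even, while $j-1$ and $M-j$ are odd. Their low bits cancel, and after halving the cell value becomes $a\oplus(m-a)\oplus(b-1)\oplus(m-b)$ with $i=2a+1$ and $j=2b$.

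One then has to show this never vanishes, i.e.\ that $a\oplus(m-a)\ne (b-1)\oplus(m-b)$. The left side is a xor of two numbers summing to $m$, and the right side is a xor of two numbers summing to $m-1$. Since $x\oplus y\equiv x+y \pmod 2$, the two sides have different parities, so they are unequal.

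Hence the mixed cells are never P-positions, and summing the two pure-parity cases gives $g(2m+1)=g(m+1)+g(m)$.
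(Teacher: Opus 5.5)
Your proof is correct and is essentially the paper's argument. You use the same split of $B_{2m+1,2m+1}$ into odd--odd cells (matching $C_{m+1,m+1}$), even--even cells (matching $C_{m,m}$), and mixed cells, excluded because after halving the two pair-xors come from pairs summing to $m$ and $m-1$ and so have different low bits. One small slip to fix: in the mixed case the four summands split two-and-two by parity, not three-and-one, as your very next sentence shows.
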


\begin{proof}
The identity $g(2m)=4g(m)$ follows immediately from
Theorem~\ref{thm:doubling}.

We prove $g(2m+1)=g(m)+g(m+1)$.
It suffices to show that
\[
v_{2m+1}(2i-1,2j-1)=v_{m+1}(i,j),\quad
v_{2m+1}(2i,2j)=v_m(i,j),
\]
and
\[
v_{2m+1}(2i-1,2j)=v_{2m+1}(2i,2j-1)=1.
\]

First,
\begin{align*}
&(2i-2)\oplus(2j-2)\oplus(2m+1-2i+1)\oplus(2m+1-2j+1)\\
&=2\bigl((i-1)\oplus(j-1)\oplus(m+1-i)\oplus(m+1-j)\bigr),
\end{align*}
which implies $v_{2m+1}(2i-1,2j-1)=v_{m+1}(i,j)$.

Next,
\begin{align*}
&(2i-1)\oplus(2j-1)\oplus(2m+1-2i)\oplus(2m+1-2j)\\
&=\{2(i-1)+1\}\oplus\{2(j-1)+1\}\oplus\{2(m-i)+1\}\oplus\{2(m-j)+1\}\\
&=2\bigl((i-1)\oplus(j-1)\oplus(m-i)\oplus(m-j)\bigr),
\end{align*}
so $v_{2m+1}(2i,2j)=v_m(i,j)$.

Finally,
\begin{align*}
&(2i-2)\oplus(2j-1)\oplus(2m+1-2i+1)\oplus(2m+1-2j)\\
&=2(i-1)\oplus\{2(j-1)+1\}\oplus2(m+1-i)\oplus\{2(m-j)+1\}\\
&=2\bigl( (i-1)\oplus(j-1)\oplus(m+1-i)\oplus(m-j)\bigr) .
\end{align*}
If $m$ is even, then the least significant bit of
$(i-1)\oplus(m+1-i)$ is $0$, while that of
$(j-1)\oplus(m-j)$ is $1$.
If $m$ is odd, the situation is reversed.
In either case, the resulting xor is nonzero, and hence
$v_{2m+1}(2i-1,2j)=1$.
By the same argument,
$v_{2m+1}(2i,2j-1)=1$.

Therefore $g(2m+1)=g(m)+g(m+1)$, completing the proof.
\end{proof}

\begin{theorem}
\label{thm:odd-sum}
For any positive integer $n$,
\[
\sum_{m=1}^{2^{\,n-1}} g(2m-1)=6^{\,n-1}.
\]
\end{theorem}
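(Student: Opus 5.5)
Let $S_n=\sum_{m=1}^{2^{n-1}} g(2m-1)$. The plan is to prove the identity by induction on $n$, using only Lemma~\ref{lem:recurrence} together with the base values. For $n=1$ the sum is just $g(1)=1=6^0$, since the unique cell of $C_{1,1}$ is the terminal position and hence a P-position.

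To organize the induction, I will introduce the companion sum over all integers,
\[
T_n=\sum_{m=1}^{2^{n}} g(m),
\]
which splits as $T_n = S_n + E_n$, where $E_n=\sum_{m=1}^{2^{n-1}} g(2m)$. By the first relation in Lemma~\ref{lem:recurrence}, $E_n=4T_{n-1}$. For the odd terms I write $2m-1=2(m-1)+1$. The term $m=1$ contributes $g(1)=1$. For $m\ge2$ the second relation gives $g(2(m-1)+1)=g(m-1)+g(m)$. Summing over $m=2,\dots,2^{n-1}$ therefore yields
\[
S_n = 1 + \sum_{k=1}^{2^{n-1}-1} g(k) + \sum_{k=2}^{2^{n-1}} g(k) = 2T_{n-1} - g(2^{n-1}).
\]
Here the boundary terms combine because the extra $g(1)=1$ cancels against the missing $k=1$ term in the second sum. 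Iterating the doubling relation from $g(1)=1$ gives $g(2^{j})=4^{j}$, so $S_n = 2T_{n-1}-4^{n-1}$.

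This produces the coupled system
\[
S_n=2T_{n-1}-4^{n-1},\qquad T_n=S_n+4T_{n-1}=6T_{n-1}-4^{n-1},
\]
with $T_0=g(1)=1$. I will solve the linear recurrence for $T_n$ by guessing $T_n=\alpha 6^n+\beta 4^n$. Matching coefficients gives $\beta 4^n = 6\beta 4^{n-1} - 4^{n-1}$, so $\beta=1/2$. The condition $T_0=1$ then forces $\alpha=1/2$, giving $T_n=(6^n+4^n)/2$. Substituting back,
\[
S_n = 6^{n-1}+4^{n-1}-4^{n-1}=6^{n-1},
\]
as claimed. The proof can be presented as a straightforward induction on the pair of statements $T_n=(6^n+4^n)/2$ and $S_n=6^{n-1}$.

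The only delicate step is the index bookkeeping in the odd sum. The range $m=1$ must be separated out, since Lemma~\ref{lem:recurrence} requires $m\ge1$ in $g(2m+1)$, and the two shifted sums must be checked to telescope exactly to $2T_{n-1}-g(2^{n-1})$. Once that boundary accounting is confirmed, for example on $n=2$ where $g(1)+g(3)=1+5=6$, the rest is routine algebra.
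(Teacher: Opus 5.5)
Your argument is correct. It shares the paper's key telescoping step: your identity $S_n = 2T_{n-1} - g(2^{n-1})$ is equivalent to the paper's
$\sum_{m=1}^{2^k} g(2m-1) = 2\sum_{m=1}^{2^k-1} g(m) + g(2^k)$.

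Where you differ is in how the full sum is evaluated. The paper uses strong induction. It writes each $m<2^k$ as $2^{u(m)}$ times an odd number and applies $g(2m)=4g(m)$ all the way down. This gives $\sum_{m=1}^{2^k-1} g(m)=\sum_{i=1}^{k}4^{k-i}6^{i-1}$, using the hypothesis for every $n\le k$, and the paper then sums this geometric series to $2^{k-1}(3^k-2^k)$. You split $T_n$ only once into odd and even parts, so the doubling relation is used a single level deep. That yields the first-order recurrence $T_n=6T_{n-1}-4^{n-1}$, which needs only ordinary induction on the pair $(S_n,T_n)$ and no $2$-adic valuation bookkeeping.

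Your route has two advantages. It proves Theorem~\ref{thm:total-sum} at the same time, rather than as a corollary. It also makes the base $6=2+4$ transparent: two copies come from the odd-index recurrence and four from doubling. The paper's unrolled version, in exchange, displays $\sum_{m<2^k} g(m)$ explicitly as a weighted sum of all lower-level odd sums.
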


\begin{proof}
We prove the claim by induction on $n$.

For $n=1$, we have $g(1)=1=6^0$, so the statement holds.

Assume that the assertion holds for all integers up to $n=k$.
For integers $x$ with $1\le x<2^{k+1}$, the number of integers satisfying
$u(x)=y$ is $2^{k-y}$.
Using Lemma~\ref{lem:recurrence}, we obtain
\begin{align*}
\sum_{m=1}^{2^{k}-1} g(m)
&=\sum_{i=1}^{k}\sum_{j=1}^{2^{i-1}} g\bigl(2^{k-i}(2j-1)\bigr)\\
&=\sum_{i=1}^{k}\sum_{j=1}^{2^{i-1}}4^{k-i}g(2j-1)\\
&=\sum_{i=1}^{k}4^{k-i}6^{i-1}\\
&=4^{k-1}\sum_{i=1}^{k}\left(\frac{3}{2}\right)^{i-1}\\
&=2^{2k-1}\left(\left(\frac{3}{2}\right)^k-1\right)\\
&=2^{k-1}(3^k-2^k).
\end{align*}

Therefore,
\begin{align*}
\sum_{m=1}^{2^{k}} g(2m-1)
&=g(1)+\sum_{m=1}^{2^{k}-1} g(2m+1)\\
&=g(1)+\sum_{m=1}^{2^{k}-1}\bigl(g(m)+g(m+1)\bigr)\\
&=2\sum_{m=1}^{2^{k}-1} g(m)+g(2^k)\\
&=2^{k}(3^k-2^k)+2^{2k}\\
&=6^{k}.
\end{align*}
This completes the induction.
\end{proof}

\begin{theorem}
\label{thm:total-sum}
For any positive integer $n$,
\[
\sum_{m=1}^{2^{n}} g(m)=\frac{4^{n}+6^{n}}{2}.
\]
\end{theorem}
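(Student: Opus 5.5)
The plan is to reuse the computation already carried out in the proof of Theorem~\ref{thm:odd-sum}. There it is shown, using Lemma~\ref{lem:recurrence} and the inductive hypothesis, that
\[
\sum_{m=1}^{2^{k}-1} g(m)=2^{k-1}(3^k-2^k).
\]
That derivation only needs Theorem~\ref{thm:odd-sum} for the indices $1,\dots,k$, and Theorem~\ref{thm:odd-sum} is now proved for all $n$. So the identity holds for every positive integer $k$, and I will first restate it cleanly as a standalone fact.

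The derivation goes as follows. Each $m$ with $1\le m\le 2^k-1$ is written uniquely as $m=2^{k-i}(2j-1)$ with $1\le i\le k$ and $1\le j\le 2^{i-1}$. Iterating $g(2m)=4g(m)$ gives $g(m)=4^{k-i}g(2j-1)$. Applying Theorem~\ref{thm:odd-sum} then gives
\[
\sum_{i=1}^{k}4^{k-i}6^{i-1}.
\]
This geometric sum evaluates to $2^{k-1}(3^k-2^k)$.

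Next, set $k=n$ and add the missing top term $g(2^n)$. Iterating Lemma~\ref{lem:recurrence} from $g(1)=1$ gives $g(2^n)=4^n$. Hence
\[
\sum_{m=1}^{2^n} g(m)=2^{n-1}(3^n-2^n)+4^n=\frac{6^n-4^n}{2}+4^n=\frac{4^n+6^n}{2}.
\]

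There is no real obstacle. The only point to handle carefully is the decomposition of $\{1,\dots,2^n-1\}$ by $2$-adic valuation: for each $i$, the odd parts must range over exactly $2j-1\le 2^{i}-1$, that is, $1\le j\le 2^{i-1}$. It is also worth checking $n=1$ directly as a sanity check: $g(1)+g(2)=1+4=5=(4+6)/2$.
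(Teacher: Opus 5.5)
Your proposal is correct and follows the paper's own proof. Both take the identity $\sum_{m=1}^{2^n-1} g(m)=2^{n-1}(3^n-2^n)$ derived inside the proof of Theorem~\ref{thm:odd-sum} and add $g(2^n)=4^n$; your remarks that this identity holds for every $k$ once Theorem~\ref{thm:odd-sum} is established, and that the $2$-adic decomposition of $\{1,\dots,2^k-1\}$ is a bijection, make explicit what the paper leaves implicit in the phrase ``By Theorem~\ref{thm:odd-sum}''.
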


\begin{proof}
By Theorem~\ref{thm:odd-sum},
\begin{align*}
\sum_{m=1}^{2^{n}} g(m)
&=\sum_{m=1}^{2^{n}-1} g(m)+g(2^{n})\\
&=2^{n-1}(3^{n}-2^{n})+2^{2n}\\
&=\frac{4^{n}+6^{n}}{2}.
\end{align*}
\end{proof}
\section{Self-Similar Structure and Cellular Automata}

By integrating the results obtained so far, we observe that the patterns
appearing in the chocolate game are similar to planar sections of a certain
three-dimensional self-similar structure, which converges to a fractal in the
limit defined below.

\begin{definition}[Sierpiński Octahedron of order $n$]
    
Let us define a geometric operation $T$ on a regular octahedron as follows:

\begin{enumerate}
    \item Let $O$ be the center of the octahedron.
    \item For each face of the octahedron, consider the tetrahedron formed by the three midpoints of the edges of the face and the point $O$.
    \item Remove all six such tetrahedra from the original octahedron.
\end{enumerate}

Applying operation $T$ to a regular octahedron yields six smaller octahedra, each with half the edge length(see Figure~\ref{fig:operation}).

\begin{figure}
\centering
\includegraphics[width=60mm]{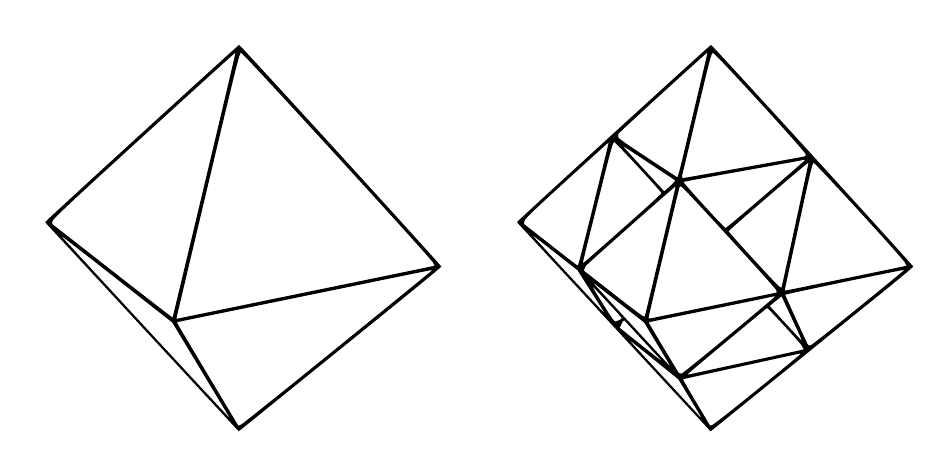}
\caption{Operation
$T$ on a regular octahedron }
\label{fig:operation}
\end{figure}

Starting from the regular octahedron with vertices
\[
(\pm1,0,0),\ (0,\pm1,0),\ (0,0,\pm1),
\]
we iteratively apply the operation $T$.  
For each $n \ge 0$, we denote by $S_n$ the resulting figure consisting of $6^n$ regular octahedra of edge length $\frac{\sqrt{2}}{2^n}$, and call $S_n$ the \emph{Sierpiński octahedron of order $n$}.

For integers $m$, we define the \emph{$m$-th horizontal section} of $S_n$ by
\[
H_{n,m} := S_n \cap \left\{ z = 1 - \frac{m}{2^n} \right\}.
\]
\end{definition}

\begin{theorem}
For $0 < m \le 2^n$, the pattern $R_{m,m}$ is similar to the section $H_{n,m}$.
\end{theorem}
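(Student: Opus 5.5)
We prove, by induction on $n$, a sharper statement with an explicit similarity. Since $S_0$ is the octahedron $|x|+|y|+|z|\le 1$, the plane $z=1-m/2^n$ meets it in the square $|x|+|y|\le m/2^n$. This square is tilted by $45^\circ$ and has diagonal length $2m/2^n$. The pattern $R_{m,m}$ lives in the axis-parallel square $[0,m]^2$, which has diagonal length $\sqrt2\,m$. So the natural candidate is the rotation-and-scaling
\[
\varphi_{n,m}(x,y)=\Bigl(\frac{x-y}{2^n},\ \frac{x+y-m}{2^n}\Bigr),
\]
which maps $[0,m]^2$ onto $\{|X|+|Y|\le m/2^n\}$ with the diagonals going to the coordinate axes. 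We aim to prove $\varphi_{n,m}(R_{m,m})=H_{n,m}$ for all $0<m\le 2^n$. For $n=0$, $m=1$, both sides are the full square: $R_{1,1}=[0,1]^2$ and $H_{0,1}$ is the section of the solid octahedron at $z=0$.

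The inductive step rests on the self-similarity of $S_n$: $S_n$ is the union of six copies of $\tfrac12 S_{n-1}$, translated by $(\pm\tfrac12,0,0)$, $(0,\pm\tfrac12,0)$, $(0,0,\pm\tfrac12)$. Fix $z=1-m/2^n$ and compute, for each copy, the local height it sees after rescaling by $2$.
\begin{itemize}
\item If $m\le 2^{n-1}$, then $z\ge\tfrac12$. Only the top copy meets the plane in a set of positive area, at local height $2(z-\tfrac12)=1-m/2^{n-1}$. Hence $H_{n,m}=\tfrac12\bigl(H_{n-1,m}\bigr)$, shifted up to $z$.
\item If $2^{n-1}<m\le 2^n$, the top copy is cut below its equator. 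Its local height $1-m/2^{n-1}$ is negative, so by the $z\mapsto -z$ symmetry of $S_{n-1}$ it gives a copy of $H_{n-1,\,2^n-m}$ centred at the origin. Each of the four equatorial copies is cut at local height $2z=1-(m-2^{n-1})/2^{n-1}$, giving copies of $H_{n-1,\,m-2^{n-1}}$ centred at $(\pm\tfrac12,0)$ and $(0,\pm\tfrac12)$. The bottom copy does not reach this height, except trivially when $m=2^n$.
\end{itemize}

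On the chocolate side, we first use Theorem~\ref{thm:doubling} repeatedly. Together with the first bullet, this reduces the claim to the case $2^{n-1}<m\le 2^n$: replacing $(m,n)$ by $(2m,n)$ halves the section's scale in exactly the way $R_{2m,2m}=2R_{m,m}$ does, so $\varphi$ intertwines the two.

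In the range $2^{n-1}<m<2^n$, Theorem~\ref{thm:decomposition} applies with $2^{k+1}=2^{n-1}$. It gives $s=m-2^{n-1}$ and $t=2^{n-1}-s=2^n-m$. So $R_{m,m}$ is four copies of $R_{s,s}$ in the corners plus one copy of $R_{t,t}$ in the middle, and these are exactly the indices appearing in the second bullet.

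We then check that $\varphi_{n,m}$ carries the four corner squares $[0,s]^2$, etc., onto the pieces centred at $(\pm\tfrac12,0)$ and $(0,\pm\tfrac12)$. It should also carry the central square $[s,s+t]^2$ onto the piece centred at the origin. In each case $\varphi_{n,m}$ should restrict, up to translation, to $\varphi_{n-1,s}$ or $\varphi_{n-1,t}$. This is a direct computation: the corner square $[0,s]^2$ has centre $(s/2,s/2)$, which maps to $(0,(s-m)/2^n)=(0,-\tfrac12)$, and similarly for the others. Once this is verified, the induction hypothesis identifies each piece.

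The boundary case $m=2^n$ is handled separately. There $s=0$, so Theorem~\ref{thm:decomposition} does not apply. Instead, Theorem~\ref{thm:doubling} gives that $R_{2^n,2^n}$ is the full square. On the octahedron side, the equator section is four copies of $H_{n-1,2^{n-1}}$, which are full squares by induction, and these tile $|X|+|Y|\le1$.

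We expect the main obstacle to be bookkeeping rather than ideas. Pieces meet along edges or at isolated vertices; for instance, when $m=2^{n-1}$ the equatorial copies touch the plane only at their apexes. We must check that these lower-dimensional contacts are already contained in the main pieces, so that the unions agree as closed sets and not merely up to measure zero. We also need care that the reflection used for the top copy is compatible with $\varphi$. I would handle both points by showing that every stray contact point lies on the boundary square of an adjacent piece, and by using the diagonal symmetry of Proposition~\ref{prop:symmetry} to absorb the reflection.
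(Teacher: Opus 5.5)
Your proposal is correct and is essentially the paper's argument. You split the section into four corner pieces and a central piece, match them with the copies of $R_{s,s}$ and $R_{t,t}$ from Theorem~\ref{thm:decomposition}, and use powers of two (full squares) as the base case.

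You also make explicit what the paper only asserts ``by construction of $S_n$'': the six-copy recursion, the lower-dimensional contacts, and the correct scale $\sqrt2/2^n$ of the similarity (the paper's factor $1/2^{n-1}$ is off by $\sqrt2$). Two of your steps can be simplified. For $m\le 2^{n-1}$, the identity $\varphi_{n,m}=\tfrac12\varphi_{n-1,m}$ plus the induction hypothesis already suffices, without Theorem~\ref{thm:doubling}. And the top piece needs no planar reflection, since $z\mapsto -z$ fixes $(x,y)$.
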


Figure~\ref{fig:sierpinski-section} illustrates the $11$-th section $H_{4,11}$ of the Sierpiński octahedron $S_4$, which is similar to $R_{11,11}$.

\begin{figure}
\centering
\includegraphics[width=60mm]{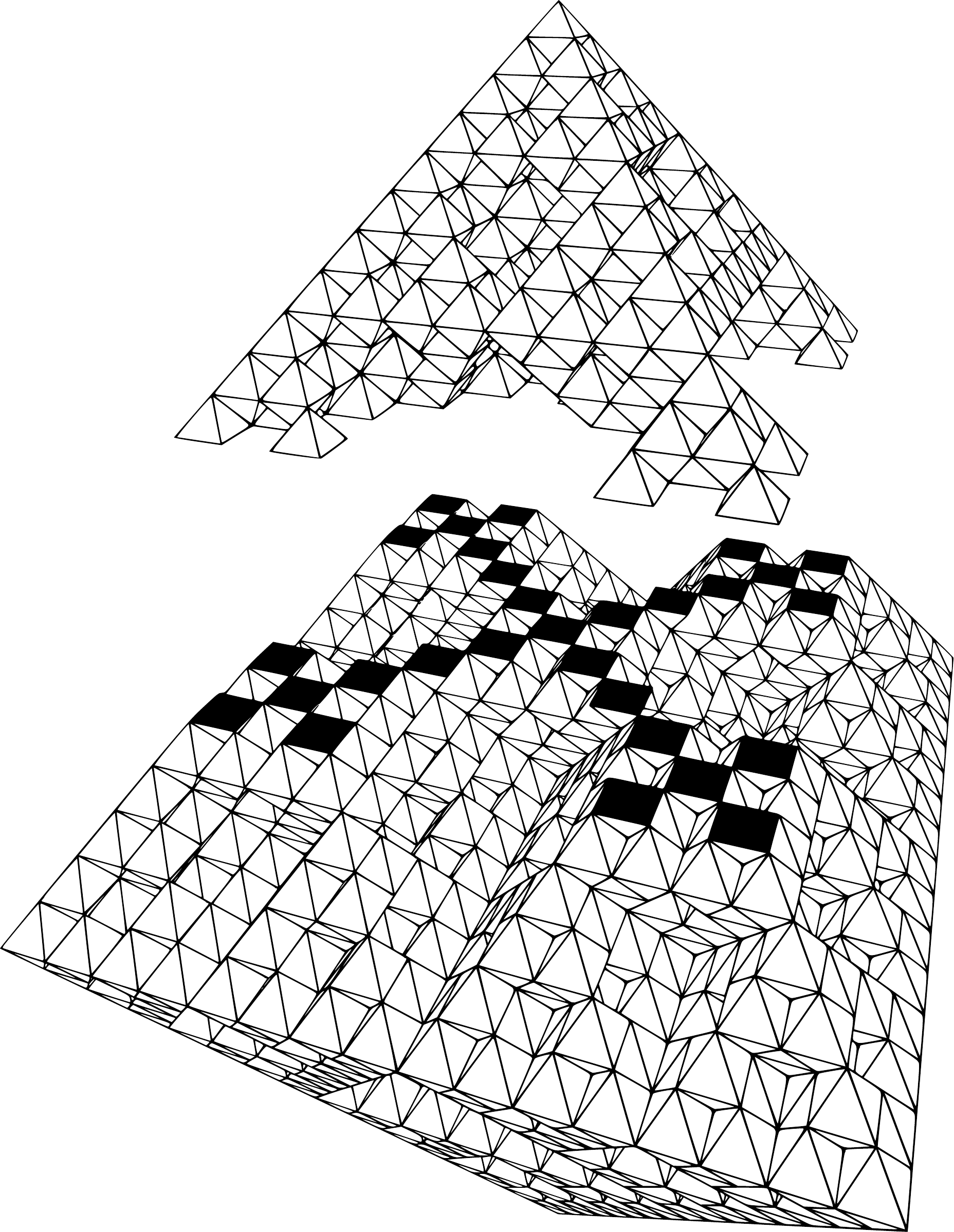}
\caption{A horizontal section of the Sierpiński octahedron of order 4}
\label{fig:sierpinski-section}
\end{figure}

\begin{proof}
Let $s$ be the integer obtained by removing the highest-order bit in the binary expansion of $m$, and set $t = m - 2s$.
By construction of $S_n$, the section $H_{n,m}$ consists of four copies of $H_{n,s}$ located at the corners and one copy of $H_{n,t}$ located at the center.

For $0 \le k \le n$, the section $H_{n,2^k}$ is congruent to
\[
\frac{1}{2^{n-1}} R_{2^k,2^k}.
\]
Therefore, the claim follows from Theorem~\ref{thm:decomposition}.
\end{proof}

Using this property, we give an alternative proof of Theorem~\ref{thm:total-sum}.

\begin{proof}[Alternative proof of Theorem~\ref{thm:total-sum}]
Since
\[
g(2^n) + 2 \sum_{m=1}^{2^n-1} g(m) = 6^n
\quad \text{and} \quad
g(2^n) = 2^{2n} = 4^n,
\]
we obtain
\[
\sum_{m=1}^{2^n} g(m)
= \frac{6^n - 4^n}{2} + 4^n
= \frac{6^n + 4^n}{2}.
\]
\end{proof}

Next, we consider sections of $S_n$ at half-integer levels.
For a positive integer $m$ with $m<2^n$, let $H_{n,m+\frac{1}{2}}$ denote the intersection of $S_n$ with the plane
\[
z = 1 - \frac{m+\frac{1}{2}}{2^n}.
\]
By the argument used in Lemma~\ref{lem:recurrence}, it follows that $H_{n,m+\frac{1}{2}}$ is congruent to
$\frac{1}{2} H_{n,2m+1}$(see Figure~\ref{fig:decomposition2}).

\begin{figure}[htbp]
\centering
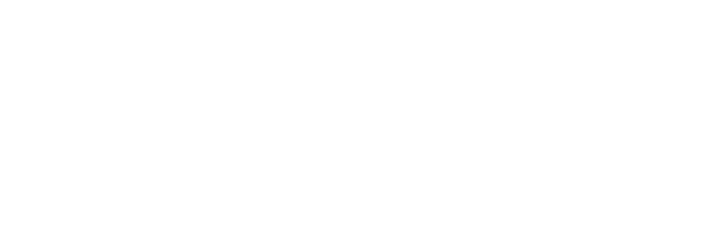
\caption{$H_{n,5}$, $H_{n,5.5}$, and $H_{n,6}$}
\label{fig:decomposition2}
\end{figure}

Furthermore, let $S_\infty$ denote the limit of $S_n$ as $n \to \infty$.
This limit is known as the \emph{Sierpiński octahedron}, a fractal object.
For a real number $0 \le r \le 1$, define $H_{r}$ to be the intersection of
$S_\infty$ with the plane $z = 1 - r$.
By the definition of the operation $T$, we have
\[
H_{n,m} = H_{ m/2^n}.
\]

In particular, since $m < 2^m$, the following theorem holds.
\begin{theorem}
For any positive integer $m$, the pattern $R_{m,m}$ is similar to the section
$H_{m/2^m}$ of $S_\infty$.
\end{theorem}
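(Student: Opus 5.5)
The plan is to deduce the statement from the previous theorem, which says that $R_{m,m}$ is similar to $H_{n,m}$ whenever $0<m\le 2^n$, together with the identity $H_{n,m}=H_{m/2^n}$ between finite-order sections and sections of the limit $S_\infty$. Concretely, I would take $n=m$. The only hypothesis to check is $0<m\le 2^m$, which I would prove by induction on $m$. The base case is $m=1<2$. For the step, $m+1\le 2m<2^{m+1}$ since $m\ge1$. So the previous theorem applies with $n=m$ and gives that $R_{m,m}$ is similar to $H_{m,m}$.

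The second step is to justify $H_{m,m}=H_{m/2^m}$, i.e. that cutting $S_\infty$ by the plane $z=1-m/2^m$ gives the same planar figure as cutting $S_m$ by that plane. Since $S_\infty=\bigcap_N S_N$ is a decreasing intersection, I would show that for every $N\ge m$ the section of $S_N$ at height $z=1-m/2^m$ equals the section of $S_m$ there. By induction it suffices to pass from $S_N$ to $S_{N+1}$.

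Each octahedron of $S_N$ has vertices at heights that are multiples of $2^{-N}$, so the plane $z=1-k/2^N$ with $k=m2^{N-m}$ meets it either in a vertex or in its middle square cross-section. The operation $T$ removes the six tetrahedra at the center $O$. The subdivision is made so that the equatorial square of each small octahedron lies in the same plane as $O$ or in the plane of the vertex. One then checks that $T$ does not change the equatorial square through $O$: that square is the union of the four equatorial squares of the lateral small octahedra, and the removed tetrahedra meet this plane only along their boundary. Similarly, sections at the top and bottom vertex levels are unchanged. Hence dyadic-level sections are stable under $T$, and passing to the limit (a nested intersection of compact sets, all with the same section) gives $H_{N,k}=H_{k/2^N}$ for all later stages.

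The main obstacle is this stability step. Strictly speaking, removing open tetrahedra leaves the square cross-section intact only as a closed set up to boundary. The earlier statement $H_{n,m}=H_{m/2^n}$ was asserted "by the definition of $T$", so I would cite it directly and give only the above local check as justification. The remaining argument is just the inequality $m<2^m$ combined with transitivity of similarity.
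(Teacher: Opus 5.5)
Your argument is the paper's: it applies the preceding theorem with $n=m$, which is legitimate because $m<2^m$, and then invokes $H_{n,m}=H_{m/2^n}$. Your local check correctly supplies the justification the paper leaves to ``the definition of $T$'': the four lateral children's equatorial squares tile the parent's square, and the top and bottom children meet such dyadic planes only in a vertex. Your open/closed worry disappears once $S_n$ is taken as a union of closed octahedra, so that $S_\infty=\bigcap_N S_N$.
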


The set $P_{m,m}$ can also be generated by a second-order cellular automaton, which may be regarded as a two-dimensional analogue of Rule-60 in one-dimensional cellular automata.

\begin{theorem}
For integers $i,j$, define a sequence of integers $\{ a_{i,j}(n) \}_{n \ge 0}$ recursively by
\begin{align*}
a_{i,j}(n+2) &=
a_{i,j}(n+1) + a_{i-1,j}(n+1) + a_{i,j-1}(n+1) + a_{i-1,j-1}(n+1) \\
&\quad + a_{i-1,j-1}(n),
\end{align*}
with initial conditions
\[
a_{i,j}(0) = 0 \quad \text{for all } i,j,
\]
and
\[
a_{i,j}(1) =
\begin{cases}
1 & \text{if } i=j=1, \\
0 & \text{otherwise}.
\end{cases}
\]
Then
\[
P_{m,m}
=
\left\{ (i,j) \in \mathbb{Z}^2 \,\middle|\,
a_{i,j}(m) \equiv 1 \pmod{2}
\right\}.
\]
\end{theorem}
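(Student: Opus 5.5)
The plan is to encode each time slice of the automaton as a polynomial over $\mathbb{F}_2$ and to show that these polynomials satisfy the same doubling rules that the proofs of Theorem~\ref{thm:doubling} and Lemma~\ref{lem:recurrence} establish for $P_{m,m}$. Set
\[
F_n(x,y)=\sum_{i,j} a_{i,j}(n)\,x^i y^j \in \mathbb{F}_2[x,y].
\]
Reduced modulo $2$, the recurrence becomes
\[
F_{n+2}=(1+x)(1+y)F_{n+1}+xy\,F_n,\qquad F_0=0,\quad F_1=xy.
\]
Write $\Pi_m=\sum_{(i,j)\in P_{m,m}}x^iy^j$. The statement to prove is then $F_m=\Pi_m$ for all $m\ge1$. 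This identity also shows that $a_{i,j}(m)$ is even for every cell outside $B_{m,m}$, so no separate support argument is needed.

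\textbf{Step 1: closed form.} Put $P=(1+x)(1+y)$ and $Q=xy$, and write $F_n=xy\,U_n$. Then $U_n$ is the Lucas-type sequence with $U_0=0$, $U_1=1$ and $U_{n+2}=PU_{n+1}+QU_n$; signs do not matter in characteristic $2$. The classical identities for Lucas sequences give
\[
U_{2m+1}=U_{m+1}^2+Q\,U_m^2
\qquad\text{and}\qquad
U_{2m}=U_m V_m,
\]
where $V_m=U_{m+1}+QU_{m-1}$. From the recurrence, $QU_{m-1}=U_{m+1}+PU_m$ modulo $2$, so $V_m\equiv 2U_{m+1}+PU_m\equiv PU_m$. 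Hence $U_{2m}\equiv PU_m^2$. If I prefer not to cite these identities, I will prove the two formulas directly by a simultaneous induction on $m$ using the recurrence. Either way I will use the Frobenius property $G(x,y)^2=G(x^2,y^2)$ in $\mathbb{F}_2[x,y]$.

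\textbf{Step 2: translation to cells.} Multiplying the identities of Step 1 by $xy$ gives
\[
F_{2m}=(1+x^{-1})(1+y^{-1})\,F_m(x^2,y^2),
\qquad
F_{2m+1}=x^{-1}y^{-1}F_{m+1}(x^2,y^2)+F_m(x^2,y^2).
\]
In the first identity, a monomial $x^iy^j$ of $F_m$ becomes the four monomials $x^{2i-a}y^{2j-b}$ with $a,b\in\{0,1\}$. This is exactly the blow-up $v_{2m}(2i-a,2j-b)=v_m(i,j)$ proved in Theorem~\ref{thm:doubling}. In the second identity, the monomials $x^{2i-1}y^{2j-1}$ come from $P_{m+1,m+1}$ and the monomials $x^{2i}y^{2j}$ come from $P_{m,m}$. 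The mixed-parity cells receive nothing. These are precisely the three identities established in the proof of Lemma~\ref{lem:recurrence}: $v_{2m+1}(2i-1,2j-1)=v_{m+1}(i,j)$, $v_{2m+1}(2i,2j)=v_m(i,j)$, and mixed cells are never P-positions. Consequently $\Pi_{2m}=(1+x^{-1})(1+y^{-1})\Pi_m(x^2,y^2)$ and $\Pi_{2m+1}=x^{-1}y^{-1}\Pi_{m+1}(x^2,y^2)+\Pi_m(x^2,y^2)$. The supports in the second formula are disjoint, so the sum in $\mathbb{F}_2$ is a genuine union.

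\textbf{Step 3: induction.} I will use strong induction on $m$. The base cases are $F_1=xy=\Pi_1$ and $F_2=Pxy=(x+x^2)(y+y^2)=\Pi_2$, the latter since every cell of $C_{2,2}$ is a P-position by Proposition~\ref{prop:diagonal}. For $m\ge3$, the indices $\lfloor m/2\rfloor$ and $\lceil m/2\rceil$ are both smaller than $m$. Step 2 expresses $F_m$ and $\Pi_m$ by the same rule in terms of smaller indices, so $F_m=\Pi_m$.

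\textbf{Main obstacle.} The step I expect to be hardest is Step 1: getting the Lucas doubling identities right over $\mathbb{F}_2[x,y]$, in particular the reduction $V_m\equiv PU_m$. If this step causes trouble, I will fall back on proving the pair $(F_{2m},F_{2m+1})$ directly from $(F_m,F_{m+1})$ by induction on $m$. For that I would check that the pair of maps in Step 2 is compatible with the recurrence, namely that $F_{2m+2}=PF_{2m+1}+QF_{2m}$ holds for the claimed formulas.
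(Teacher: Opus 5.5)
Your proposal is correct, and it follows a genuinely different route from the paper.

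The paper proves the statement by a local induction in time. Assuming the claim up to $m=n$, it checks the five-term update rule at a cell $(i+1,j+1)$ directly on the xor values. It writes $\{i,n-i\}$ and $\{j,n-j\}$ as pairs $a,b$ and $c,d$ with $a+b=c+d=n$. It then uses the rules that rewrite $a\oplus b$ as $(a-1)\oplus b+1$, $a\oplus(b-1)+1$ or $(a-1)\oplus(b-1)$ according to the $2$-adic valuations $u(a),u(b)$. Finally it splits into the case $u(a)<u(b)$, $u(c)<u(d)$ and the case $u(a)=u(b)=u(c)=u(d)$ (a carry argument) to count how many of the five predecessor cell values vanish.

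You instead identify $F_n/(xy)$ over $\mathbb{F}_2$ with the Lucas sequence $U_n(P,Q)$, where $P=(1+x)(1+y)$ and $Q=xy$. You then use its doubling formulas together with Frobenius, so the induction runs by bisection of $m$. It reduces to the halving recursions already established in Theorem~\ref{thm:doubling} and Lemma~\ref{lem:recurrence}.

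What your route buys:
\begin{itemize}
\item It avoids all bit-level case analysis.
\item Because it is an identity of Laurent polynomials, it settles both directions of the equivalence and every cell of $\mathbb{Z}^2$ at once, including boundary cells and cells outside $B_{m,m}$. The paper's argument as written starts from $v_{n+1}(i+1,j+1)=0$ and leaves the converse and the boundary cells implicit.
\item It makes transparent that the automaton's self-similarity is precisely the self-similarity of $P_{m,m}$ proved in Sections 3 and 4.
\end{itemize}
The paper's route, in exchange, is independent of those doubling results. It explains cell by cell why the neighbourhood rule matches the xor criterion of Theorem~\ref{thm:nim}.

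When writing yours up, state explicitly for the even case, as you already do for the odd case, that the $2\times 2$ blocks $\{2i-1,2i\}\times\{2j-1,2j\}$ are disjoint for distinct $(i,j)$. Then no cancellation modulo $2$ occurs there either.
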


\begin{proof}
We prove by induction on $m$ that
\[
v_m(i,j) = 0
\quad \Longleftrightarrow \quad
a_{i,j}(m) \equiv 1 \pmod{2}.
\]

For $m=1$, we have $v_1(1,1)=0$, so the claim holds.

Assume that the statement holds for all integers up to $m=n$.
For positive integers $a,b$, the following relations are valid:
\[
a \oplus b =
\begin{cases}
(a-1)\oplus b + 1 & \text{if } u(a) < u(b), \\
a \oplus (b-1) + 1 & \text{if } u(a) > u(b), \\
(a-1)\oplus (b-1) & \text{if } u(a) = u(b).
\end{cases}
\]
In particular, the condition $u(a)=u(b)$ is equivalent to
\[
a \oplus (b-1) = (a-1)\oplus b.
\]

Suppose that
\[
v_{n+1}(i+1,j+1)
= i \oplus j \oplus (n-i) \oplus (n-j) = 0.
\]
Reorder $i,n-i$ as $a,b$ so that $u(a) \le u(b)$, and similarly reorder $j,n-j$ as $c,d$ with $u(c) \le u(d)$.
Then $a+b=c+d=n$ and $a\oplus b=c\oplus d$.

\medskip
\noindent
(1) If \(u(a) < u(b)\) and \(u(c) < u(d)\), then
\[
(a-1)\oplus b \oplus (c-1)\oplus d = 0.
\]
In this case, The following equivalence holds:
\[
a\oplus(b-1)=c\oplus(d-1)
\quad \Longleftrightarrow \quad
(a-1)\oplus(b-1)=(c-1)\oplus(d-1).
\]
Moreover, assuming \((a-1) \oplus b = c \oplus (d-1)\), it follows that
\((c-1) \oplus d = c \oplus (d-1)\), which contradicts \(u(c) < u(d)\).
Therefore,
\[
(a-1) \oplus b \oplus c \oplus (d-1) \neq 0.
\]
Similarly, we have
\[
a \oplus (b-1) \oplus (c-1) \oplus d \neq 0.
\]

\medskip
\noindent
(2) If $u(a)=u(b)$ or $u(c)=u(d)$, then necessarily
\[
u(a)=u(b)=u(c)=u(d),
\]
since the carry positions in the binary addition of $a$ and $b$ coincide with those in the binary addition of $c$ and $d$.
Thus,
\[
(a-1)\oplus(b-1)\oplus(c-1)\oplus(d-1)=0,
\]
and all four of the following expressions vanish:
\[
(a-1)\oplus b \oplus (c-1)\oplus d,
\quad
(a-1)\oplus b \oplus c \oplus (d-1),
\]
\[
a\oplus(b-1)\oplus(c-1)\oplus d,
\quad
a\oplus(b-1)\oplus c \oplus (d-1).
\]

By cases (1), (2), and the induction hypothesis, we conclude that
\[
v_{n+1}(i+1,j+1)=0
\quad \Longleftrightarrow \quad
a_{i+1,j+1}(n+1) \equiv 1 \pmod{2},
\]
which completes the proof.
\end{proof}

\section{A Relation of Our Presentation of P-positions and a Nim with a Pass}
In this article, we presented a different perspective on the chocolate problems, and in this section, we introduce interesting facts that support the usefulness of our perspective. These facts were provided by Dr. Ryohei Miyadera, and we included them in this article with his permission.

An interesting but challenging question in combinatorial game theory has been determining what happens when standard game rules are modified to allow a \textit{one-time pass}. This pass move may be used at most once in the game and not from the terminal position. Once either player has used a pass, it is no longer available. In the case of classical Nim, the introduction of the pass alters the game's mathematical structure, considerably increasing its complexity, and finding the formula that describes the set of previous players' winning positions remains an important open question that has defied traditional approaches. 

In \cite{nimpass2} and \cite{nimpassfinite}, Chan, Low, Locke, and Wong described the set of previous players' positions of Nim with a pass when the number of stones in each pile is at most four. This study shows how difficult it is to obtain the formula for the previous player's winning positions in classical Nim with a pass. 

The graph of the traditional three-pile Nim has an elegant mathematical structure. See  Figure \ref{nim}. However, the graph of the traditional three-pile Nim with a pass does not seem to have any definite structure. See Figure \ref{nimpass}. 

\begin{figure}[htbp]
\begin{tabular}{cc}
\begin{minipage}[t]{0.5\textwidth}
\begin{center}
\includegraphics[height=3cm]{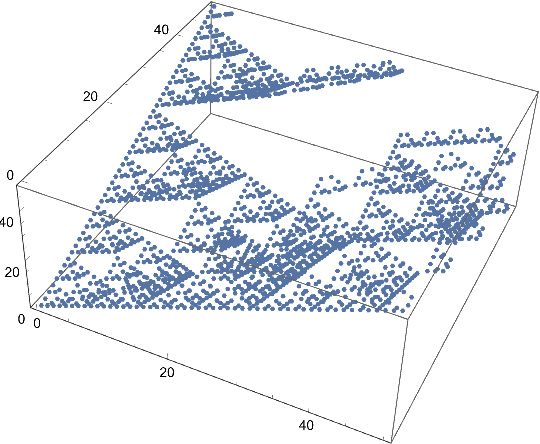}
\caption{ Graph of Nim}
\label{nim}
\end{center}
\end{minipage}
\begin{minipage}[t]{0.5\textwidth}
\begin{center}
\includegraphics[height=3cm]{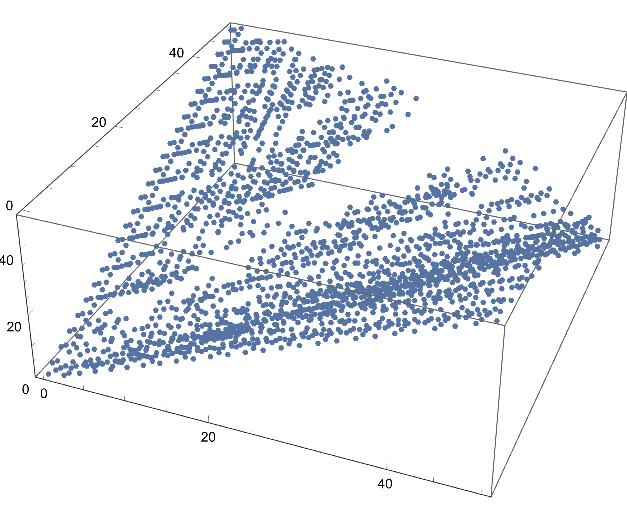}
\caption{Graph of Nim with a Pass }
\label{nimpass}
\end{center}
\end{minipage}
\end{tabular}
\end{figure}

Next, we examine the previous player's winning positions in four-pile Nim with a pass from the perspective introduced in the previous sections.
In Figures \ref{s14},  \ref{s18},  \ref{s21}, and  \ref{s22}, blue rectangles are the previous player's winning positions in Nim, and red rectangles are the previous player's winning positions in Nim with a pass.
The red rectangles have some definite mathematical structures. 
Therefore, the perspective introduced in the previous sections presented 
some definite mathematical structure in the history of the traditional Nim with more than two piles and a pass.
This supports the usefulness of our perspective on chocolate problems.

\begin{figure}[htbp]
\begin{tabular}{cc}
\begin{minipage}[t]{0.5\textwidth}
\begin{center}
\includegraphics[height=3cm]{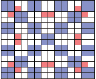}
\caption{ $m=14$}
\label{s14}
\end{center}
\end{minipage}
\begin{minipage}[t]{0.5\textwidth}
\begin{center}
\includegraphics[height=3cm]{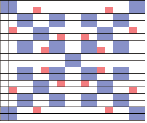}
\caption{$m=18$ }
\label{s18}
\end{center}
\end{minipage}
\end{tabular}
\end{figure}

\begin{figure}[htbp]
\begin{tabular}{cc}
\begin{minipage}[t]{0.5\textwidth}
\begin{center}
\includegraphics[height=3cm]{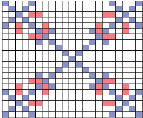}
\caption{ $m=21$}
\label{s21}
\end{center}
\end{minipage}
\begin{minipage}[t]{0.5\textwidth}
\begin{center}
\includegraphics[height=3cm]{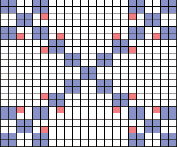}
\caption{$m=22$ }
\label{s22}
\end{center}
\end{minipage}
\end{tabular}
\end{figure}

\section*{Future Work}

As directions for future research, we are interested in whether there exist other combinatorial games whose sets of $P$-positions form fractal objects or their planar sections, and whether meaningful results can be obtained by varying the directions along which such self-similar structures are sliced.
The appearance of recursive and self-similar patterns in the chocolate game suggests that cellular automata provide a natural framework for describing and generating such configurations.
Motivated by this perspective, we would like to investigate what other classes of combinatorial games admit a generation of their $P$-positions via cellular automata.
In particular, it is of interest to ask whether there exist cellular automaton rules that generate the set of $P$-positions for the chocolate game $C_{m,n}$ in the case $m<n$.
Clarifying such rules may further illuminate the relationship between impartial games, self-similar structures, and discrete dynamical systems.

\section*{Acknowledgements}

This research was carried out with the cooperation of Yugo~Narisawa.
The authors are also deeply grateful to Ryohei~Miyadera, a teacher at Keimei Gakuin Junior and High School, for his invaluable assistance in the preparation of this manuscript.

\end{document}

%% file: figure01.pdf_tex
\begingroup%
  \makeatletter%
  \providecommand\color[2][]{%
    \errmessage{(Inkscape) Color is used for the text in Inkscape, but the package 'color.sty' is not loaded}%
    \renewcommand\color[2][]{}%
  }%
  \providecommand\transparent[1]{%
    \errmessage{(Inkscape) Transparency is used (non-zero) for the text in Inkscape, but the package 'transparent.sty' is not loaded}%
    \renewcommand\transparent[1]{}%
  }%
  \providecommand\rotatebox[2]{#2}%
  \newcommand*\fsize{\dimexpr\f@size pt\relax}%
  \newcommand*\lineheight[1]{\fontsize{\fsize}{#1\fsize}\selectfont}%
  \ifx\svgwidth\undefined%
    \setlength{\unitlength}{330.75bp}%
    \ifx\svgscale\undefined%
      \relax%
    \else%
      \setlength{\unitlength}{\unitlength * \real{\svgscale}}%
    \fi%
  \else%
    \setlength{\unitlength}{\svgwidth}%
  \fi%
  \global\let\svgwidth\undefined%
  \global\let\svgscale\undefined%
  \makeatother%
  \begin{picture}(1,0.46938776)%
    \lineheight{1}%
    \setlength\tabcolsep{0pt}%
    \put(0,0){\includegraphics[width=\unitlength,page=1]{figure01.pdf}}%
    \put(0.02040816,0.43367347){\color[rgb]{0.58823529,0.58823529,0.58823529}\makebox(0,0)[lt]{\lineheight{1.25}\smash{\begin{tabular}[t]{l}$m=1$\end{tabular}}}}%
    \put(0.02040816,0.43367347){\color[rgb]{0,0,0}\makebox(0,0)[lt]{\lineheight{1.25}\smash{\begin{tabular}[t]{l}$m=1$\end{tabular}}}}%
    \put(0,0){\includegraphics[width=\unitlength,page=2]{figure01.pdf}}%
    \put(0.14285714,0.43367347){\color[rgb]{0.58823529,0.58823529,0.58823529}\makebox(0,0)[lt]{\lineheight{1.25}\smash{\begin{tabular}[t]{l}$m=2$\end{tabular}}}}%
    \put(0.14285714,0.43367347){\color[rgb]{0,0,0}\makebox(0,0)[lt]{\lineheight{1.25}\smash{\begin{tabular}[t]{l}$m=2$\end{tabular}}}}%
    \put(0,0){\includegraphics[width=\unitlength,page=3]{figure01.pdf}}%
    \put(0.26530612,0.43367347){\color[rgb]{0.58823529,0.58823529,0.58823529}\makebox(0,0)[lt]{\lineheight{1.25}\smash{\begin{tabular}[t]{l}$m=3$\end{tabular}}}}%
    \put(0.26530612,0.43367347){\color[rgb]{0,0,0}\makebox(0,0)[lt]{\lineheight{1.25}\smash{\begin{tabular}[t]{l}$m=3$\end{tabular}}}}%
    \put(0,0){\includegraphics[width=\unitlength,page=4]{figure01.pdf}}%
    \put(0.3877551,0.43367347){\color[rgb]{0.58823529,0.58823529,0.58823529}\makebox(0,0)[lt]{\lineheight{1.25}\smash{\begin{tabular}[t]{l}$m=4$\end{tabular}}}}%
    \put(0.3877551,0.43367347){\color[rgb]{0,0,0}\makebox(0,0)[lt]{\lineheight{1.25}\smash{\begin{tabular}[t]{l}$m=4$\end{tabular}}}}%
    \put(0,0){\includegraphics[width=\unitlength,page=5]{figure01.pdf}}%
    \put(0.51020408,0.43367347){\color[rgb]{0.58823529,0.58823529,0.58823529}\makebox(0,0)[lt]{\lineheight{1.25}\smash{\begin{tabular}[t]{l}$m=5$\end{tabular}}}}%
    \put(0.51020408,0.43367347){\color[rgb]{0,0,0}\makebox(0,0)[lt]{\lineheight{1.25}\smash{\begin{tabular}[t]{l}$m=5$\end{tabular}}}}%
    \put(0,0){\includegraphics[width=\unitlength,page=6]{figure01.pdf}}%
    \put(0.63265306,0.43367347){\color[rgb]{0.58823529,0.58823529,0.58823529}\makebox(0,0)[lt]{\lineheight{1.25}\smash{\begin{tabular}[t]{l}$m=6$\end{tabular}}}}%
    \put(0.63265306,0.43367347){\color[rgb]{0,0,0}\makebox(0,0)[lt]{\lineheight{1.25}\smash{\begin{tabular}[t]{l}$m=6$\end{tabular}}}}%
    \put(0,0){\includegraphics[width=\unitlength,page=7]{figure01.pdf}}%
    \put(0.75510204,0.43367347){\color[rgb]{0.58823529,0.58823529,0.58823529}\makebox(0,0)[lt]{\lineheight{1.25}\smash{\begin{tabular}[t]{l}$m=7$\end{tabular}}}}%
    \put(0.75510204,0.43367347){\color[rgb]{0,0,0}\makebox(0,0)[lt]{\lineheight{1.25}\smash{\begin{tabular}[t]{l}$m=7$\end{tabular}}}}%
    \put(0,0){\includegraphics[width=\unitlength,page=8]{figure01.pdf}}%
    \put(0.87755102,0.43367347){\color[rgb]{0.58823529,0.58823529,0.58823529}\makebox(0,0)[lt]{\lineheight{1.25}\smash{\begin{tabular}[t]{l}$m=8$\end{tabular}}}}%
    \put(0.87755102,0.43367347){\color[rgb]{0,0,0}\makebox(0,0)[lt]{\lineheight{1.25}\smash{\begin{tabular}[t]{l}$m=8$\end{tabular}}}}%
    \put(0,0){\includegraphics[width=\unitlength,page=9]{figure01.pdf}}%
    \put(0.02040816,0.29081633){\color[rgb]{0.58823529,0.58823529,0.58823529}\makebox(0,0)[lt]{\lineheight{1.25}\smash{\begin{tabular}[t]{l}$m=9$\end{tabular}}}}%
    \put(0.02040816,0.29081633){\color[rgb]{0,0,0}\makebox(0,0)[lt]{\lineheight{1.25}\smash{\begin{tabular}[t]{l}$m=9$\end{tabular}}}}%
    \put(0,0){\includegraphics[width=\unitlength,page=10]{figure01.pdf}}%
    \put(0.14285714,0.29081633){\color[rgb]{0.58823529,0.58823529,0.58823529}\makebox(0,0)[lt]{\lineheight{1.25}\smash{\begin{tabular}[t]{l}$m=10$\end{tabular}}}}%
    \put(0.14285714,0.29081633){\color[rgb]{0,0,0}\makebox(0,0)[lt]{\lineheight{1.25}\smash{\begin{tabular}[t]{l}$m=10$\end{tabular}}}}%
    \put(0,0){\includegraphics[width=\unitlength,page=11]{figure01.pdf}}%
    \put(0.26530612,0.29081633){\color[rgb]{0.58823529,0.58823529,0.58823529}\makebox(0,0)[lt]{\lineheight{1.25}\smash{\begin{tabular}[t]{l}$m=11$\end{tabular}}}}%
    \put(0.26530612,0.29081633){\color[rgb]{0,0,0}\makebox(0,0)[lt]{\lineheight{1.25}\smash{\begin{tabular}[t]{l}$m=11$\end{tabular}}}}%
    \put(0,0){\includegraphics[width=\unitlength,page=12]{figure01.pdf}}%
    \put(0.3877551,0.29081633){\color[rgb]{0.58823529,0.58823529,0.58823529}\makebox(0,0)[lt]{\lineheight{1.25}\smash{\begin{tabular}[t]{l}$m=12$\end{tabular}}}}%
    \put(0.3877551,0.29081633){\color[rgb]{0,0,0}\makebox(0,0)[lt]{\lineheight{1.25}\smash{\begin{tabular}[t]{l}$m=12$\end{tabular}}}}%
    \put(0,0){\includegraphics[width=\unitlength,page=13]{figure01.pdf}}%
    \put(0.51020408,0.29081633){\color[rgb]{0.58823529,0.58823529,0.58823529}\makebox(0,0)[lt]{\lineheight{1.25}\smash{\begin{tabular}[t]{l}$m=13$\end{tabular}}}}%
    \put(0.51020408,0.29081633){\color[rgb]{0,0,0}\makebox(0,0)[lt]{\lineheight{1.25}\smash{\begin{tabular}[t]{l}$m=13$\end{tabular}}}}%
    \put(0,0){\includegraphics[width=\unitlength,page=14]{figure01.pdf}}%
    \put(0.63265306,0.29081633){\color[rgb]{0.58823529,0.58823529,0.58823529}\makebox(0,0)[lt]{\lineheight{1.25}\smash{\begin{tabular}[t]{l}$m=14$\end{tabular}}}}%
    \put(0.63265306,0.29081633){\color[rgb]{0,0,0}\makebox(0,0)[lt]{\lineheight{1.25}\smash{\begin{tabular}[t]{l}$m=14$\end{tabular}}}}%
    \put(0,0){\includegraphics[width=\unitlength,page=15]{figure01.pdf}}%
    \put(0.75510204,0.29081633){\color[rgb]{0.58823529,0.58823529,0.58823529}\makebox(0,0)[lt]{\lineheight{1.25}\smash{\begin{tabular}[t]{l}$m=15$\end{tabular}}}}%
    \put(0.75510204,0.29081633){\color[rgb]{0,0,0}\makebox(0,0)[lt]{\lineheight{1.25}\smash{\begin{tabular}[t]{l}$m=15$\end{tabular}}}}%
    \put(0,0){\includegraphics[width=\unitlength,page=16]{figure01.pdf}}%
    \put(0.87755102,0.29081633){\color[rgb]{0.58823529,0.58823529,0.58823529}\makebox(0,0)[lt]{\lineheight{1.25}\smash{\begin{tabular}[t]{l}$m=16$\end{tabular}}}}%
    \put(0.87755102,0.29081633){\color[rgb]{0,0,0}\makebox(0,0)[lt]{\lineheight{1.25}\smash{\begin{tabular}[t]{l}$m=16$\end{tabular}}}}%
    \put(0,0){\includegraphics[width=\unitlength,page=17]{figure01.pdf}}%
    \put(0.02040816,0.14795918){\color[rgb]{0.58823529,0.58823529,0.58823529}\makebox(0,0)[lt]{\lineheight{1.25}\smash{\begin{tabular}[t]{l}$m=17$\end{tabular}}}}%
    \put(0.02040816,0.14795918){\color[rgb]{0,0,0}\makebox(0,0)[lt]{\lineheight{1.25}\smash{\begin{tabular}[t]{l}$m=17$\end{tabular}}}}%
    \put(0,0){\includegraphics[width=\unitlength,page=18]{figure01.pdf}}%
    \put(0.14285714,0.14795918){\color[rgb]{0.58823529,0.58823529,0.58823529}\makebox(0,0)[lt]{\lineheight{1.25}\smash{\begin{tabular}[t]{l}$m=18$\end{tabular}}}}%
    \put(0.14285714,0.14795918){\color[rgb]{0,0,0}\makebox(0,0)[lt]{\lineheight{1.25}\smash{\begin{tabular}[t]{l}$m=18$\end{tabular}}}}%
    \put(0,0){\includegraphics[width=\unitlength,page=19]{figure01.pdf}}%
    \put(0.26530612,0.14795918){\color[rgb]{0.58823529,0.58823529,0.58823529}\makebox(0,0)[lt]{\lineheight{1.25}\smash{\begin{tabular}[t]{l}$m=19$\end{tabular}}}}%
    \put(0.26530612,0.14795918){\color[rgb]{0,0,0}\makebox(0,0)[lt]{\lineheight{1.25}\smash{\begin{tabular}[t]{l}$m=19$\end{tabular}}}}%
    \put(0,0){\includegraphics[width=\unitlength,page=20]{figure01.pdf}}%
    \put(0.3877551,0.14795918){\color[rgb]{0.58823529,0.58823529,0.58823529}\makebox(0,0)[lt]{\lineheight{1.25}\smash{\begin{tabular}[t]{l}$m=20$\end{tabular}}}}%
    \put(0.3877551,0.14795918){\color[rgb]{0,0,0}\makebox(0,0)[lt]{\lineheight{1.25}\smash{\begin{tabular}[t]{l}$m=20$\end{tabular}}}}%
    \put(0,0){\includegraphics[width=\unitlength,page=21]{figure01.pdf}}%
    \put(0.51020408,0.14795918){\color[rgb]{0.58823529,0.58823529,0.58823529}\makebox(0,0)[lt]{\lineheight{1.25}\smash{\begin{tabular}[t]{l}$m=21$\end{tabular}}}}%
    \put(0.51020408,0.14795918){\color[rgb]{0,0,0}\makebox(0,0)[lt]{\lineheight{1.25}\smash{\begin{tabular}[t]{l}$m=21$\end{tabular}}}}%
    \put(0,0){\includegraphics[width=\unitlength,page=22]{figure01.pdf}}%
    \put(0.63265306,0.14795918){\color[rgb]{0.58823529,0.58823529,0.58823529}\makebox(0,0)[lt]{\lineheight{1.25}\smash{\begin{tabular}[t]{l}$m=22$\end{tabular}}}}%
    \put(0.63265306,0.14795918){\color[rgb]{0,0,0}\makebox(0,0)[lt]{\lineheight{1.25}\smash{\begin{tabular}[t]{l}$m=22$\end{tabular}}}}%
    \put(0,0){\includegraphics[width=\unitlength,page=23]{figure01.pdf}}%
    \put(0.75510204,0.14795918){\color[rgb]{0.58823529,0.58823529,0.58823529}\makebox(0,0)[lt]{\lineheight{1.25}\smash{\begin{tabular}[t]{l}$m=23$\end{tabular}}}}%
    \put(0.75510204,0.14795918){\color[rgb]{0,0,0}\makebox(0,0)[lt]{\lineheight{1.25}\smash{\begin{tabular}[t]{l}$m=23$\end{tabular}}}}%
    \put(0,0){\includegraphics[width=\unitlength,page=24]{figure01.pdf}}%
    \put(0.87755102,0.14795918){\color[rgb]{0.58823529,0.58823529,0.58823529}\makebox(0,0)[lt]{\lineheight{1.25}\smash{\begin{tabular}[t]{l}$m=24$\end{tabular}}}}%
    \put(0.87755102,0.14795918){\color[rgb]{0,0,0}\makebox(0,0)[lt]{\lineheight{1.25}\smash{\begin{tabular}[t]{l}$m=24$\end{tabular}}}}%
    \put(0,0){\includegraphics[width=\unitlength,page=25]{figure01.pdf}}%
  \end{picture}%
\endgroup%

%% file: figure02.pdf_tex
\begingroup%
  \makeatletter%
  \providecommand\color[2][]{%
    \errmessage{(Inkscape) Color is used for the text in Inkscape, but the package 'color.sty' is not loaded}%
    \renewcommand\color[2][]{}%
  }%
  \providecommand\transparent[1]{%
    \errmessage{(Inkscape) Transparency is used (non-zero) for the text in Inkscape, but the package 'transparent.sty' is not loaded}%
    \renewcommand\transparent[1]{}%
  }%
  \providecommand\rotatebox[2]{#2}%
  \newcommand*\fsize{\dimexpr\f@size pt\relax}%
  \newcommand*\lineheight[1]{\fontsize{\fsize}{#1\fsize}\selectfont}%
  \ifx\svgwidth\undefined%
    \setlength{\unitlength}{326.25bp}%
    \ifx\svgscale\undefined%
      \relax%
    \else%
      \setlength{\unitlength}{\unitlength * \real{\svgscale}}%
    \fi%
  \else%
    \setlength{\unitlength}{\svgwidth}%
  \fi%
  \global\let\svgwidth\undefined%
  \global\let\svgscale\undefined%
  \makeatother%
  \begin{picture}(1,0.44827586)%
    \lineheight{1}%
    \setlength\tabcolsep{0pt}%
    \put(0,0){\includegraphics[width=\unitlength,page=1]{figure02.pdf}}%
    \put(0.03448276,0.42528736){\color[rgb]{0.58823529,0.58823529,0.58823529}\makebox(0,0)[lt]{\lineheight{1.25}\smash{\begin{tabular}[t]{l}$R_{3,3}$\end{tabular}}}}%
    \put(0.03448276,0.42528736){\color[rgb]{0,0,0}\makebox(0,0)[lt]{\lineheight{1.25}\smash{\begin{tabular}[t]{l}$R_{3,3}$\end{tabular}}}}%
    \put(0,0){\includegraphics[width=\unitlength,page=2]{figure02.pdf}}%
    \put(0.03448276,0.14942529){\color[rgb]{0.58823529,0.58823529,0.58823529}\makebox(0,0)[lt]{\lineheight{1.25}\smash{\begin{tabular}[t]{l}$R_{3,3}$\end{tabular}}}}%
    \put(0.03448276,0.14942529){\color[rgb]{0,0,0}\makebox(0,0)[lt]{\lineheight{1.25}\smash{\begin{tabular}[t]{l}$R_{3,3}$\end{tabular}}}}%
    \put(0,0){\includegraphics[width=\unitlength,page=3]{figure02.pdf}}%
    \put(0.17241379,0.32183908){\color[rgb]{0.58823529,0.58823529,0.58823529}\makebox(0,0)[lt]{\lineheight{1.25}\smash{\begin{tabular}[t]{l}$R_{5,5}$\end{tabular}}}}%
    \put(0.17241379,0.32183908){\color[rgb]{0,0,0}\makebox(0,0)[lt]{\lineheight{1.25}\smash{\begin{tabular}[t]{l}$R_{5,5}$\end{tabular}}}}%
    \put(0,0){\includegraphics[width=\unitlength,page=4]{figure02.pdf}}%
    \put(0.37931034,0.42528736){\color[rgb]{0.58823529,0.58823529,0.58823529}\makebox(0,0)[lt]{\lineheight{1.25}\smash{\begin{tabular}[t]{l}$R_{3,3}$\end{tabular}}}}%
    \put(0.37931034,0.42528736){\color[rgb]{0,0,0}\makebox(0,0)[lt]{\lineheight{1.25}\smash{\begin{tabular}[t]{l}$R_{3,3}$\end{tabular}}}}%
    \put(0,0){\includegraphics[width=\unitlength,page=5]{figure02.pdf}}%
    \put(0.37931034,0.14942529){\color[rgb]{0.58823529,0.58823529,0.58823529}\makebox(0,0)[lt]{\lineheight{1.25}\smash{\begin{tabular}[t]{l}$R_{3,3}$\end{tabular}}}}%
    \put(0.37931034,0.14942529){\color[rgb]{0,0,0}\makebox(0,0)[lt]{\lineheight{1.25}\smash{\begin{tabular}[t]{l}$R_{3,3}$\end{tabular}}}}%
    \put(0,0){\includegraphics[width=\unitlength,page=6]{figure02.pdf}}%
    \put(0.5862069,0.42528736){\color[rgb]{0.58823529,0.58823529,0.58823529}\makebox(0,0)[lt]{\lineheight{1.25}\smash{\begin{tabular}[t]{l}$R_{11,11}$\end{tabular}}}}%
    \put(0.5862069,0.42528736){\color[rgb]{0,0,0}\makebox(0,0)[lt]{\lineheight{1.25}\smash{\begin{tabular}[t]{l}$R_{11,11}$\end{tabular}}}}%
    \put(0,0){\includegraphics[width=\unitlength,page=7]{figure02.pdf}}%
  \end{picture}%
\endgroup%

%% file: figure05.pdf_tex
\begingroup%
  \makeatletter%
  \providecommand\color[2][]{%
    \errmessage{(Inkscape) Color is used for the text in Inkscape, but the package 'color.sty' is not loaded}%
    \renewcommand\color[2][]{}%
  }%
  \providecommand\transparent[1]{%
    \errmessage{(Inkscape) Transparency is used (non-zero) for the text in Inkscape, but the package 'transparent.sty' is not loaded}%
    \renewcommand\transparent[1]{}%
  }%
  \providecommand\rotatebox[2]{#2}%
  \newcommand*\fsize{\dimexpr\f@size pt\relax}%
  \newcommand*\lineheight[1]{\fontsize{\fsize}{#1\fsize}\selectfont}%
  \ifx\svgwidth\undefined%
    \setlength{\unitlength}{337.5bp}%
    \ifx\svgscale\undefined%
      \relax%
    \else%
      \setlength{\unitlength}{\unitlength * \real{\svgscale}}%
    \fi%
  \else%
    \setlength{\unitlength}{\svgwidth}%
  \fi%
  \global\let\svgwidth\undefined%
  \global\let\svgscale\undefined%
  \makeatother%
  \begin{picture}(1,0.35555556)%
    \lineheight{1}%
    \setlength\tabcolsep{0pt}%
    \put(0,0){\includegraphics[width=\unitlength,page=1]{figure05.pdf}}%
    \put(0.04444444,0.28148148){\color[rgb]{0.58823529,0.58823529,0.58823529}\makebox(0,0)[lt]{\lineheight{1.25}\smash{\begin{tabular}[t]{l}$H_{n,5}$\end{tabular}}}}%
    \put(0.04444444,0.28148148){\color[rgb]{0,0,0}\makebox(0,0)[lt]{\lineheight{1.25}\smash{\begin{tabular}[t]{l}$H_{n,5}$\end{tabular}}}}%
    \put(0,0){\includegraphics[width=\unitlength,page=2]{figure05.pdf}}%
    \put(0.35555556,0.29259259){\color[rgb]{0.58823529,0.58823529,0.58823529}\makebox(0,0)[lt]{\lineheight{1.25}\smash{\begin{tabular}[t]{l}$H_{n,5.5}$\end{tabular}}}}%
    \put(0.35555556,0.29259259){\color[rgb]{0,0,0}\makebox(0,0)[lt]{\lineheight{1.25}\smash{\begin{tabular}[t]{l}$H_{n,5.5}$\end{tabular}}}}%
    \put(0,0){\includegraphics[width=\unitlength,page=3]{figure05.pdf}}%
    \put(0.68888889,0.3037037){\color[rgb]{0.58823529,0.58823529,0.58823529}\makebox(0,0)[lt]{\lineheight{1.25}\smash{\begin{tabular}[t]{l}$H_{n,6}$\end{tabular}}}}%
    \put(0.68888889,0.3037037){\color[rgb]{0,0,0}\makebox(0,0)[lt]{\lineheight{1.25}\smash{\begin{tabular}[t]{l}$H_{n,6}$\end{tabular}}}}%
    \put(0,0){\includegraphics[width=\unitlength,page=4]{figure05.pdf}}%
  \end{picture}%
\endgroup%